\newcommand{\E}{\mathbb{E}}
\newcommand{\RR}{\mathbb{R}}
\providecommand{\argmax}{\mathop\mathrm{arg max}} 
\providecommand{\argmin}{\mathop\mathrm{arg min}}
\providecommand{\tr}{\mathop\mathrm{tr}}
\renewenvironment{proof}{\noindent\textbf{Proof.}\hspace*{.3em}}{\qed\\}
\newenvironment{proof-sketch}{\noindent\textbf{Proof Sketch}
  \hspace*{0.em}}{\qed\bigskip\\}
\newenvironment{proof-idea}{\noindent\textbf{Proof Idea}
  \hspace*{0.em}}{\qed\bigskip\\}
\newenvironment{proof-of-lemma}[1][{}]{\noindent\textbf{Proof of Lemma {#1}.}
  \hspace*{0.em}}{\qed\\}
\newenvironment{proof-of-corollary}[1][{}]{\noindent\textbf{Proof of Corollary {#1}.}
  \hspace*{0.em}}{\qed\\}
\newenvironment{proof-of-theorem}[1][{}]{\noindent\textbf{Proof of Theorem {#1}.}
  \hspace*{0.em}}{\qed\\}
\newenvironment{proof-attempt}{\noindent\textbf{Proof Attempt}
  \hspace*{0.em}}{\qed\bigskip\\}
\newtheorem{theorem}{Theorem}[section]
\newtheorem{lemma}{Lemma}[section]
\newtheorem{remark}{Remark}[section]
\renewcommand*{\backref}[1]{\ifx#1\relax \else Page #1 \fi}
\renewcommand*{\backrefalt}[4]{%
  \ifcase #1 \footnotesize{(Not cited.)}%
  \or        \footnotesize{(Cited on page~#2.)}%
  \else      \footnotesize{(Cited on pages~#2.)}%
  \fi
}
\newcommand*{\colorboxed}{}
\def\colorboxed#1#{%
  \colorboxedAux{#1}%
}
\newcommand*{\colorboxedAux}[3]{%
  \begingroup
    \colorlet{cb@saved}{.}%
    \color#1{#2}%
    \boxed{%
      \color{cb@saved}%
      #3%
    }%
  \endgroup
}
\numberwithin{equation}{section}
\newcommand{\todol}[2][]{{%
 \let\marginpar\marginnote
 \reversemarginpar
 \renewcommand{\baselinestretch}{0.8}%
 \todo[color=yellow]{#2}}}
\title{A Note on the Convergence of Muon}
\author{
{Jiaxiang Li} \thanks{Department of Electrical and Computer Engineering, University of Minnesota.  \texttt{li003755@umn.edu}}
\and
{Mingyi Hong} \thanks{Department of Electrical and Computer Engineering, University of Minnesota.  \texttt{mhong@umn.edu}}
}
\date{}
\begin{document}
\maketitle

\begin{abstract}
    In this note, we inspect the convergence of a new optimizer for pretraining LLMs, namely the Muon optimizer. Such an optimizer is closely related to a specialized steepest descent method where the update direction is the minimizer of the quadratic approximation of the objective function under spectral norm \citep{bernstein2024old}. We provide the convergence analysis on both versions of the optimizer and discuss its implications.
\end{abstract}

\section{Introduction}
Recently, a new optimizer named Muon\footnote{see \url{https://kellerjordan.github.io/posts/muon/}.} has drawn attentions due to its success in training large models. Consider the following stochastic optimization problem:
\begin{equation}
    \min_{X} f(X)=\E_{\xi} [F(x,\xi)]
\end{equation}
where the variable $X\in\RR^{m\times n}$ is a matrix. Without loss of generality we assume $m\geq n$. Muon optimizer writes: 
\begin{equation}\label{eq:muon}
\begin{aligned}
& G_t \leftarrow \nabla F\left(X_{t},\xi_t\right) \\
& B_t \leftarrow \mu B_{t-1} + G_t \\
& O_t \leftarrow \argmin_{O}\{\|O-B_t\|_F:O^\top O=I\text{ or }O O^\top=I\}\\
& X_{t+1} \leftarrow X_{t}-\eta_t O_t
\end{aligned}
\end{equation}
where the $O$ step can be equivalently written as $O_t=U V^\top$ where $B_t=U\Sigma V^\top$ is the singular value decomposition.



On the other hand, a closely related formula of \eqref{eq:muon} is the following spectral optimizer:
\begin{equation}\label{eq:muon_descent_form}
\begin{aligned}
& G_t \leftarrow \nabla F\left(X_{t},\xi_t\right) \\
& B_t \leftarrow \mu B_{t-1} + G_t \\
& \Delta_t \leftarrow \argmin_{\Delta}\{\tr(B_t^\top \Delta) + \frac{1}{2 \eta_t}\|\Delta\|_2^2\}\\
& X_{t+1} \leftarrow X_{t} + \Delta_t
\end{aligned}
\end{equation}

When $\|\cdot\|_2$ is the matrix 2-norm (spectral norm), i.e. the largest singular value, the solution of the third line of \eqref{eq:muon_descent_form} is $\Delta_t=-\eta_t \|B_t\|_{*} U V^\top$, where $B_t=U\Sigma V^\top$ is the singular value decomposition (see \citet[Proposition 5]{bernstein2024old}). In this case \eqref{eq:muon_descent_form} only differs from \eqref{eq:muon} by the norm $\|B_t\|_{*}$.

\section{Convergence of Muon}

Now we go back to the original Muon with a slight change: 
\begin{equation}\label{eq:muon2}
\begin{aligned}
& G_t \leftarrow \frac{1}{B}\sum_{i=1}^{B}\nabla F\left(X_{t},\xi_{t, i}\right), \\
& B_t \leftarrow \beta B_{t-1} + (1-\beta) G_t, \\
& O_t \leftarrow U_t V_t^\top,\text{ with SVD }B_t=U_t S_t V_t^\top, \\
& X_{t+1} \leftarrow X_{t}-\eta_t O_t,
\end{aligned}
\end{equation}
where in the second line $G_t$ is multiplied by $1-\beta$ as compared to \eqref{eq:muon}. \eqref{eq:muon2} is a heavy-ball method comparing to the Nesterov-type method \eqref{eq:muon}.

Note that since we assumed $X\in\RR^{m\times n}$ and $m\geq n$, the SVD satisfies $U_t\in\RR^{m\times r}$, $S_t\in\RR^{r\times r}$ and $V_t\in\RR^{n\times r}$, where $r$ is the rank of $B_t$, which we certainly need to assume to be positive. When $n=1$ ,i.e. the vector case, \label{eq:muon2} reduces to the normalized gradient descent with momentum. The convergence of normalized gradient descent with momentum is analyzed in \citet{cutkosky2020momentum}.

We now state the assumptions. Here the assumptions are with respect to the Frobenius norm. We need to assume the Lipschitzness of the gradient in Frobenius norm (a plain generalization of the Lipschitz smoothness in vector case), i.e.
\begin{equation}
    \|\nabla f(X) - \nabla f(Y)\|_{F}\leq L \|X- Y\|_{F}
\end{equation}
which implies \cite[Lemma 5.7]{beck2017first}
\begin{equation}\label{eq:lip_smooth}
    f(Y)\leq f(X) + \langle\nabla f(X), Y - X\rangle + \frac{L}{2}\|X-Y\|_{F}^2.
\end{equation}
The inner product is just the Frobenius inner product $\langle X, Y\rangle=\tr(X^\top Y)$. Another assumption is that $\nabla F(X,\xi)$ is an unbiased estimator of $\nabla f(X)$ with variance bounded by $\sigma^2$, i.e. $\E\|\nabla F(X,\xi) - \nabla f(X)\|_{F}^2\leq\sigma^2$. From now on we use $\|\cdot\|$ to denote the Frobenius norm throughout this section.

Now we move to convergence analysis. The main body of the analysis follows \cite{cutkosky2020momentum}, where we have a specific new descent lemma-type result for Frobenius norm.
\begin{lemma}\label{lemma:normalized_sgd_grad_bound}
    For update \eqref{eq:muon2}, we have that
    \begin{equation*}
        f(X_{t+1}) \leq f(X_{t}) -\frac{\eta_t}{4}\|\nabla f(X_{t})\| + \frac{5}{2} \eta_t\|{\nabla} f(X_{t}) - {B}_t\| + \frac{\eta_t^2 n L}{2}.
    \end{equation*}
    If we take $\eta_t=\eta$ a constant, we have
    \begin{equation*}
        \sum_{t=1}^{T} \|\nabla f(X_t)\| \leq \frac{4(f(X_1)-f^*)}{\eta} + 10 \sum_{t=1}^{T}\|\nabla f(X_t) - B_t\| + 2\eta n L T.
    \end{equation*}
\end{lemma}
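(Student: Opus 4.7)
The strategy is to adapt the Cutkosky--Mehta argument for normalized SGD with momentum \citep{cutkosky2020momentum} to the matrix setting, where the update direction is the polar factor $O_t=U_tV_t^\top$ of the momentum $B_t$ rather than a unit vector. First I would apply the Frobenius smoothness inequality \eqref{eq:lip_smooth} to the update $X_{t+1}=X_t-\eta_tO_t$, obtaining
\begin{equation*}
f(X_{t+1}) \leq f(X_t) - \eta_t \langle \nabla f(X_t), O_t\rangle + \tfrac{L\eta_t^2}{2}\|O_t\|^2.
\end{equation*}
Because the columns of $U_t$ and $V_t$ are orthonormal, $\|O_t\|^2=\tr(V_tU_t^\top U_tV_t^\top)=\tr(I_r)=r\le n$, which already produces the $\tfrac{\eta_t^2 nL}{2}$ Lipschitz term and explains the dimension dependence in the bound.

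The heart of the proof is then to establish a lower bound of the form
\begin{equation*}
\langle \nabla f(X_t), O_t\rangle \geq \tfrac14\|\nabla f(X_t)\| - \tfrac52\|\nabla f(X_t)-B_t\|.
\end{equation*}
The key identity is $\langle B_t, O_t\rangle = \tr(S_t) = \|B_t\|_*$, since $O_t$ is the polar factor of $B_t$; this lets me split
\begin{equation*}
\langle \nabla f(X_t), O_t\rangle = \|B_t\|_* + \langle \nabla f(X_t)-B_t, O_t\rangle.
\end{equation*}
I would then combine $\|B_t\|_* \ge \|B_t\|_F \ge \|\nabla f(X_t)\| - \|\nabla f(X_t)-B_t\|$ (nuclear-vs-Frobenius inequality followed by reverse triangle) with a bound on the perturbation term through the spectral--nuclear duality $|\langle A,O_t\rangle|\le\|A\|_*\|O_t\|_{\mathrm{op}}=\|A\|_*$, and finally perform a short case split on whether $\|\nabla f(X_t)-B_t\|$ dominates $\|\nabla f(X_t)\|$ in order to fit the cross terms into the stated constants $\tfrac14$ and $\tfrac52$.

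The summation form follows by setting $\eta_t=\eta$, summing the per-step bound over $t=1,\dots,T$, and using $f(X_{T+1})\ge f^*$ so that $\sum_t (f(X_t)-f(X_{t+1}))\le f(X_1)-f^*$ telescopes; isolating $\sum_t\|\nabla f(X_t)\|$ on the left and multiplying through by $4/\eta$ yields the second display in the lemma.

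The main obstacle will be controlling $|\langle \nabla f(X_t)-B_t, O_t\rangle|$ without an unwanted $\sqrt n$ factor: naive Cauchy--Schwarz in Frobenius gives $|\langle A,O_t\rangle|\le\sqrt{r}\,\|A\|_F$, and the nuclear-norm route gives $|\langle A,O_t\rangle|\le\|A\|_*\le\sqrt n\,\|A\|_F$, both of which would pollute the $\tfrac52$ coefficient with a dimensional factor. Closing this gap---or absorbing the resulting dimension factor into the $\tfrac{nL\eta^2}{2}$ piece via a careful rearrangement---is where I expect most of the technical care to go, since the rest of the argument is bookkeeping around the polar-factor identity.
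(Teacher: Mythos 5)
Your smoothness step ($\|O_t\|_F^2=r\le n$) and the final telescoping are exactly as in the paper, but your proposal stops short of a proof at precisely the point you flag as the ``main obstacle,'' and that is where the paper's argument is genuinely different from yours. The paper never uses the polar-factor identity $\langle B_t,O_t\rangle=\|B_t\|_*$. Instead it writes $-\langle\nabla f(X_t),O_t\rangle=-\langle\tilde\nabla_t,\tilde B_t\rangle$ with $\tilde\nabla_t=\nabla f(X_t)V_tS_t^{-1/2}$ and $\tilde B_t=B_tV_tS_t^{-1/2}$, expands $-\langle\tilde\nabla_t,\tilde B_t\rangle=\tfrac12\bigl(\|\tilde\nabla_t-\tilde B_t\|_F^2-\|\tilde\nabla_t\|_F^2-\|\tilde B_t\|_F^2\bigr)$, bounds this by $\bigl(\|\nabla f(X_t)-B_t\|_F^2-\|\nabla f(X_t)\|_F^2\bigr)/(2\|B_t\|_2)$, and then runs a case split: if $2\|\nabla f(X_t)-B_t\|_F\le\|\nabla f(X_t)\|_F$ this gives $-\tfrac14\|\nabla f(X_t)\|_F$, and otherwise it bounds $-\langle\nabla f(X_t),O_t\rangle\le\|\nabla f(X_t)\|_F\,\|O_t\|_2$ directly. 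The constants $\tfrac14$ and $\tfrac52$ come from that case analysis, not from any duality bound on the cross term, so your plan of ``fitting the cross terms into the stated constants'' by a sharper pairing inequality is not the paper's mechanism and, as written, leaves the lemma unproved.

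Moreover, the gap you identified is substantive rather than bookkeeping you can expect to absorb. Along your route the error term comes out in nuclear norm: $\langle\nabla f(X_t),O_t\rangle\ge\|B_t\|_*-\|\nabla f(X_t)-B_t\|_*\ge\|\nabla f(X_t)\|_F-2\|\nabla f(X_t)-B_t\|_*$, and converting $\|\cdot\|_*$ to $\|\cdot\|_F$ necessarily costs a factor $\sqrt{\operatorname{rank}}$. This is not an artifact: taking $\nabla f(X_t)=I_n$ and $B_t=-\epsilon I_n$ (so $O_t=-I_n$) gives $-\langle\nabla f(X_t),O_t\rangle=n$ while $\|\nabla f(X_t)\|_F=\sqrt n$ and $\|\nabla f(X_t)-B_t\|_F=(1+\epsilon)\sqrt n$, so no absolute constant in front of the Frobenius error term can make the inner-product bound hold for all $n$; a dimension-free constant requires pairing the cross term against the spectral norm of $O_t$, i.e.\ measuring the momentum error in $\|\cdot\|_*$. (Note in this connection that the paper's second case invokes $|\langle\nabla f(X_t),O_t\rangle|\le\|\nabla f(X_t)\|_F\|O_t\|_2$, which is not a valid norm pairing---Frobenius and spectral are not dual---and the example above is exactly where it fails.) What your decomposition does prove cleanly is the variant $f(X_{t+1})\le f(X_t)-\eta_t\|\nabla f(X_t)\|_F+2\eta_t\|\nabla f(X_t)-B_t\|_*+\tfrac{\eta_t^2nL}{2}$, whose summed form carries a nuclear-norm (hence up to $\sqrt n$-inflated) momentum-error term; it does not yield the lemma with the stated coefficients $\tfrac14$ and $\tfrac52$ on Frobenius quantities.
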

\begin{proof}
    By Lipschitz smoothness we have
    \begin{equation}\label{eq:temp1.5}
    \begin{aligned}
        f(X_{t+1}) \leq & f(X_{t}) + \langle\nabla f(X_t), X_{t+1} - X_t\rangle + \frac{L}{2}\|X_{t+1} - X_t\|^2 \\
        = & f(X_{t}) -\eta_t \langle\nabla f(X_t), O_t\rangle + \frac{\eta_t^2 n L}{2}
    \end{aligned}
    \end{equation}
    since $\|X_{t+1} - X_t\|=\eta_t\|O_t\| = \eta_t \sqrt{n}$.

    Now we analyze the term $-\eta_t\langle\nabla f(X_t), O_t\rangle$. Suppose $B_t=U_t S_t V_t^\top$ being the SVD decomposition of $B_t$ so that $O_t = U_t V_t^\top$. Then
    \begin{equation}\label{eq:temp2}
    \begin{split}
        -\langle\nabla f(X_t), O_t\rangle = & -\tr(\nabla f(X_t)^\top U_t V_t^\top) \\
        = & -\tr(\nabla f(X_t)^\top U_t S_t V_t^\top V_t S_t^{-1} V_t^\top) \\
        = & -\tr(\nabla f(X_t)^\top B_t V_t S_t^{-1} V_t^\top) \\
        = & -\langle \nabla f(X_t) V_t S_t^{-1/2}, B_t V_t S_t^{-1/2}\rangle
    \end{split}
    \end{equation}

    For simplicity denote $\nabla_t= \nabla f(X_t)$, $\Tilde{\nabla}_t=\nabla f(X_t) V_t S_t^{-1/2}$ and $\Tilde{B}_t=B_t V_t S_t^{-1/2}$. We have
    \begin{equation}\label{eq:temp3}
    \begin{split}
        &\|\nabla_t\| = \|\nabla_t V_t S_t^{-1/2} S_t^{1/2} V_t^\top\|\leq \|\Tilde{\nabla}_t\| \|S_t^{1/2}\|_2 \Rightarrow \|\Tilde{\nabla}_t\|\geq \frac{\|\nabla_t\|}{\|S_t^{1/2}\|_2} \\
        &\|\Tilde{\nabla}_t - \Tilde{B}_t\| = \|({\nabla}_t - {B}_t) V_t S_t^{-1/2}\| \leq \|({\nabla}_t - {B}_t)\|\|S_t^{-1/2}\|_2
    \end{split}
    \end{equation}

    Now back to \eqref{eq:temp2}, we have
    \begin{equation}\label{eq:temp4}
    \begin{split}
        -\langle\nabla f(X_t), O_t\rangle = & -\langle \nabla f(X_t) V_t S_t^{-1/2}, B_t V_t S_t^{-1/2}\rangle = -\langle \Tilde{\nabla}_t, \Tilde{B}_t\rangle \\
        = & \frac{1}{2}\left( \|\Tilde{\nabla}_t - \Tilde{B}_t\|^2 - \|\Tilde{\nabla}_t\|^2 - \|\Tilde{B}_t\|^2\right) \\
        \eqref{eq:temp3}\leq & \frac{1}{2}\left( \|({\nabla}_t - {B}_t)\|^2\|S_t^{-1/2}\|_2^2 - \frac{\|\nabla_t\|^2}{\|S_t^{1/2}\|_2^2} \right) \\
        \leq & \frac{\|({\nabla}_t - {B}_t)\|^2 - \|\nabla_t\|^2}{2\|S_t^{1/2}\|_2^2} = \frac{\|({\nabla}_t - {B}_t)\|^2 - \|\nabla_t\|^2}{2\|S_t\|_2} \\
        = & \frac{\|{\nabla}_t - {B}_t\|^2 - \|\nabla_t\|^2}{2\|{B}_t\|_2}
    \end{split}
    \end{equation}
    where the last inequality is due to $\|S^{-1}\|_2\leq 1/\|S\|_2$. Now if $2 \|{\nabla}_t - {B}_t\| \leq \|\nabla_t\|$, we have (note that $\|{B}_t\|_2\leq \|{B}_t\|$)
    \begin{equation*}
    \begin{split}
        -\langle\nabla f(X_t), O_t\rangle \leq & \frac{\|{\nabla}_t - {B}_t\|^2 - \|\nabla_t\|^2}{2\|{B}_t\|_2} \\
        \leq & -\frac{3}{8}\frac{\|\nabla_t\|^2}{\|{B}_t\|_2} \leq -\frac{3}{8}\frac{\|\nabla_t\|^2}{\|{B}_t\|} \\
        = & -\frac{3}{8}\frac{\|\nabla_t\|^2}{\|{B}_t - {\nabla}_t + {\nabla}_t\|} \\
        \leq & -\frac{3}{8}\frac{\|\nabla_t\|^2}{\|{B}_t - {\nabla}_t\| + \|{\nabla}_t\|} \\
        \leq & -\frac{3}{8}\frac{\|\nabla_t\|^2}{\frac{1}{2}\|{\nabla}_t\| + \|{\nabla}_t\|} = -\frac{1}{4} \|\nabla_t\|
    \end{split}
    \end{equation*}

    Otherwise if $2 \|{\nabla}_t - {B}_t\| > \|\nabla_t\|$, we have 
    \begin{equation*}
    \begin{split}
        -\langle\nabla f(X_t), O_t\rangle \leq & \|\nabla_t\| \|O_t\|_2 = \|\nabla_t\| \\
        = & -\frac{1}{4}\|\nabla_t\| + \frac{5}{4}\|\nabla_t\| \\
        \leq & -\frac{1}{4}\|\nabla_t\| + \frac{5}{2} \|{\nabla}_t - {B}_t\|
    \end{split}
    \end{equation*}

    So in either cases we have
    \begin{equation}
        -\langle\nabla f(X_t), O_t\rangle \leq -\frac{1}{4}\|\nabla_t\| + \frac{5}{2} \|{\nabla}_t - {B}_t\|
    \end{equation}
    Plugging this back to \eqref{eq:temp1.5} we get
    \begin{equation}\label{eq:temp5}
        f(X_{t+1}) \leq f(X_{t}) -\frac{\eta_t}{4}\|\nabla f(X_{t})\| + \frac{5}{2} \eta_t\|{\nabla} f(X_{t}) - {B}_t\| + \frac{\eta_t^2 n L}{2}
    \end{equation}
    and the second equation in the lemma statement is obtained by summing up \eqref{eq:temp5} from $t=1,...,T$.
\end{proof}

\begin{theorem}
    Let $R=f(X_1)-f^*$. If we take $\beta=1-\alpha$ with $\alpha=\min(\frac{\sqrt{RL}}{\sigma \sqrt{T}}, 1)$, also $\eta_t=\eta=\sqrt{\frac{4 R}{(10/\alpha + 2n) T L}}$ and $B=1$ (batch free convergence), then for update \eqref{eq:muon2} we have
    \begin{equation*}
    \frac{1}{T} \sum_{t=1}^T \mathbb{E}\left[\left\|\nabla f\left(X_t\right)\right\|\right] \leq \mathcal{O}\left(\frac{\sqrt{n R L}}{\sqrt{T}}+\frac{\sigma^2}{\sqrt{R L T}}+\frac{\sqrt{\sigma}(R L)^{1 / 4}}{T^{1 / 4}}\right).
    \end{equation*}

    If we take $\beta$ as an arbitrary constant, then we will need to take $B=T$, so that
    \begin{equation*}
        \frac{1}{T}\sum_{t=1}^{T} \|\nabla f(X_t)\| \leq \mathcal{O}\left(\frac{\sqrt{n R L}}{\sqrt{T}}+\frac{\sigma}{T^{3 / 2}}+\frac{\sigma}{\sqrt{T}}\right).
    \end{equation*}
\end{theorem}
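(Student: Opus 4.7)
The strategy is to combine the per-step descent bound of Lemma~\ref{lemma:normalized_sgd_grad_bound} with an explicit bound on the momentum error $\epsilon_t := B_t - \nabla f(X_t)$, and then tune $\alpha$ and $\eta$ separately in each regime. After taking expectations in the summed form of Lemma~\ref{lemma:normalized_sgd_grad_bound}, the task reduces to controlling $\frac{1}{T}\sum_{t=1}^T \mathbb{E}\|\epsilon_t\|$, so the bulk of the proof is a variance-plus-bias analysis of the momentum buffer, following the template of \cite{cutkosky2020momentum} but now in the Frobenius-norm matrix setting.

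To bound the error, I would expand the momentum recursion (with $\beta = 1-\alpha$) as
\[
\epsilon_t = (1-\alpha)\,\epsilon_{t-1} + (1-\alpha)\bigl(\nabla f(X_{t-1}) - \nabla f(X_t)\bigr) + \alpha\bigl(G_t - \nabla f(X_t)\bigr),
\]
where the last term is mean-zero conditional on the history and therefore decouples under $\mathbb{E}\|\cdot\|^2$. Applying the weighted Young inequality $\|a+b\|^2 \le \tfrac{1}{1-\alpha}\|a\|^2 + \tfrac{1}{\alpha}\|b\|^2$ to the remaining sum, and using smoothness plus $\|X_t - X_{t-1}\| = \eta\sqrt{n}$ (since $\|O_{t-1}\|_F = \sqrt{n}$) to get $\|\nabla f(X_t) - \nabla f(X_{t-1})\| \le L\eta\sqrt{n}$, yields the linear recursion
\[
\mathbb{E}\|\epsilon_t\|^2 \le (1-\alpha)\,\mathbb{E}\|\epsilon_{t-1}\|^2 + \mathcal{O}\!\Bigl(\tfrac{L^2 \eta^2 n}{\alpha} + \tfrac{\alpha^2 \sigma^2}{B}\Bigr).
\]
Unrolling and applying Jensen's inequality gives $\mathbb{E}\|\epsilon_t\| = \mathcal{O}\bigl((1-\alpha)^{(t-1)/2}\sigma/\sqrt{B} + L\eta\sqrt{n}/\alpha + \sqrt{\alpha/B}\,\sigma\bigr)$.

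Substituting this into Lemma~\ref{lemma:normalized_sgd_grad_bound} and summing the geometric tail, the composite bound takes the schematic form
\[
\frac{1}{T}\sum_{t=1}^T \mathbb{E}\|\nabla f(X_t)\| \;\lesssim\; \frac{R}{\eta T} + \frac{\sigma/\sqrt{B}}{\alpha T} + \frac{L\eta\sqrt{n}}{\alpha} + \sqrt{\tfrac{\alpha}{B}}\,\sigma + \eta n L.
\]
For part one ($B=1$), balancing the $\eta$-dependent terms $R/(\eta T)$, $L\eta/\alpha$, and $\eta nL$ recovers the prescribed $\eta = \sqrt{4R/((10/\alpha + 2n)TL)}$, while balancing $\sqrt{\alpha}\,\sigma$ against $\sigma/(\alpha T)$ selects $\alpha = \min\{\sqrt{RL}/(\sigma\sqrt{T}),\,1\}$; collecting the pieces produces the three claimed terms $\sqrt{nRL/T}$, $\sigma^2/\sqrt{RLT}$, and $\sqrt{\sigma}(RL)^{1/4}/T^{1/4}$. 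For part two ($B=T$, arbitrary fixed $\beta$), the per-step noise contribution is already $\sigma^2/T$, so the same recursion collapses to $\mathbb{E}\|\epsilon_t\| = \mathcal{O}(L\eta\sqrt{n}/(1-\beta) + \sigma/\sqrt{T})$; choosing $\eta \asymp \sqrt{R/(nLT)}$ then delivers the stated $\sqrt{nRL/T}$, $\sigma/T^{3/2}$, and $\sigma/\sqrt{T}$ rates.

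The main obstacle is the choice of weights in the Young inequality inside the momentum recursion: using the weighted form above (rather than the naive $2\|a\|^2 + 2\|b\|^2$) is exactly what preserves the contraction factor $1-\alpha$ while keeping only a $1/\alpha$ factor on the smoothness term, which in turn is what allows the final balance between $\alpha$ and $\eta$ to produce the characteristic $T^{-1/4}$ rate; any looser weighting degrades the bound by a factor of $1/\sqrt{\alpha}$ and destroys the tuning. The remaining steps---geometric summation, Jensen's inequality, substitution into Lemma~\ref{lemma:normalized_sgd_grad_bound}, and optimization over $(\alpha,\eta)$---are routine algebra.
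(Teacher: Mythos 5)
Your proposal is correct and takes essentially the same route as the paper: control the momentum error $B_t-\nabla f(X_t)$ via the Cutkosky--Mehta-style recursion, substitute into Lemma~\ref{lemma:normalized_sgd_grad_bound}, and tune $(\alpha,\eta)$ in the two regimes, the only difference being bookkeeping (you run a second-moment recursion with weighted Young and finish with Jensen, whereas the paper unrolls the recursion and bounds the first moment directly using the uncorrelated-noise variance bound), and both yield the same three terms. Incidentally, you retain the $\sqrt{n}$ factor from $\|X_{t+1}-X_t\|=\eta\sqrt{n}$ in the drift term, which the paper's bound $\eta L\sum_{\tau}\beta^{\tau}$ silently drops; carrying it only changes the $n$-dependence of the lower-order $T^{-1/4}$ term, not the stated rates in $T$.
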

\begin{proof}
    The proof follows \cite{cutkosky2020momentum}. Denote $\hat{\delta}_t=B_t-\nabla f(X_t)$, ${\delta}_t=G_t-\nabla f(X_t)$ and $S(X,Y)=\nabla f(X) - \nabla f(Y)$. Note that we have
    \begin{equation}
    \begin{aligned}
        &\E [\delta_t]=0,\ \E\left[\|\delta_t\|^2\right]\leq \frac{\sigma^2}{m},\\
        &\E[\langle\delta_i, \delta_j\rangle]=0,\ \forall i\not=j \\
        & \|S(X,Y)\|\leq L\|X - Y\|
    \end{aligned}
    \end{equation}

    Now following the update in \eqref{eq:muon2}, we get
    \begin{equation*}
    \begin{split}
        \hat{\delta}_{t+1} & = \beta \hat{\delta}_{t} + (1-\beta) \delta_t + S(X_{t}, X_{t+1}) \\
        & = \beta^{t} \hat{\delta}_{1} + (1 - \beta) \sum_{\tau=0}^{t-1}\beta^{\tau}\delta_{t-\tau} + \sum_{\tau=0}^{t-1}\beta^{\tau}S(X_{t-\tau}, X_{t+1-\tau}),
    \end{split}
    \end{equation*}
    therefore 
    \begin{equation*}
        \|\hat{\delta}_{t+1}\|\leq \beta^{t} \|\hat{\delta}_{1}\| + (1 - \beta) \|\sum_{\tau=0}^{t-1}\beta^{\tau}\delta_{t-\tau}\| + \eta L \sum_{\tau=0}^{t-1}\beta^{\tau}.
    \end{equation*}
    Taking expectation we get (using the fact that $\hat{\delta}_1 = \delta_1$) 
    \begin{equation*}
    \begin{split}
        \E\|\hat{\delta}_{t+1}\| \leq & \beta^{t} \frac{\sigma}{\sqrt{B}} + (1 - \beta) \sqrt{\sum_{\tau=0}^{t-1}\beta^{2\tau}\frac{\sigma^2}{B}} + \eta L \sum_{\tau=0}^{t-1}\beta^{\tau}\\
        \leq & \frac{\sigma}{\sqrt{B}}\beta^{t} + \frac{\sigma}{\sqrt{B}} \frac{1 - \beta}{\sqrt{1-\beta^2}} + \eta L \frac{1}{1-\beta} \\
        \leq & \frac{\sigma}{\sqrt{B}}\beta^{t} + \frac{\sigma}{\sqrt{B}} \sqrt{1-\beta} + \eta L \frac{1}{1-\beta}.
    \end{split}
    \end{equation*}

    In conclusion we get
    \begin{equation*}
        \sum_{t=1}^{T}\E\|\hat{\delta}_{t+1}\|\leq \frac{\sigma}{(1-\beta)\sqrt{B}} + T\sqrt{1-\beta}\frac{\sigma}{\sqrt{B}} + \frac{T\eta L}{1-\beta}.
    \end{equation*}
    Now utilize Lemma \ref{lemma:normalized_sgd_grad_bound}, we get
    \begin{equation*}
    \begin{split}
        \sum_{t=1}^{T} \|\nabla f(X_t)\| \leq & \frac{4(f(X_1)-f^*)}{\eta} + 10 \sum_{t=1}^{T}\|\nabla f(X_t) - B_t\| + 2\eta n L T \\
        \leq & \frac{4(f(X_1)-f^*)}{\eta} + 10 \frac{\sigma}{(1-\beta)\sqrt{B}} + 10T\sqrt{1-\beta}\frac{\sigma}{\sqrt{B}} + 10\frac{T\eta L}{1-\beta} + 2\eta n L T \\
        \leq & \frac{4R}{\eta} + 10 \frac{\sigma}{(1-\beta)\sqrt{B}} + 10T\sqrt{1-\beta}\frac{\sigma}{\sqrt{m}} + 10\frac{T\eta L}{1-\beta} + 2\eta n L T.
    \end{split}
    \end{equation*}

    Now we just need to take $\eta=\sqrt{\frac{4 R}{(10/(1-\beta) + 2n) T L}}$ so that
    \begin{equation*}
        \sum_{t=1}^{T} \|\nabla f(X_t)\| \leq 2 \sqrt{(10/(1-\beta) + 2n) R T L} + 10 \frac{\sigma}{(1-\beta)\sqrt{B}} + 10T\sqrt{1-\beta}\frac{\sigma}{\sqrt{B}}.
    \end{equation*}

    Now we have two types of parameter choice. If we take $B=1$ (batch size free), we need to take $1-\beta=\min(1, \frac{\sqrt{RL}}{\sigma\sqrt{T}})$ so that we have
    \begin{equation*}
        \sum_{t=1}^{T} \|\nabla f(X_t)\| \leq 2\sqrt{2n RTL} + 10\sigma^2\sqrt{\frac{T}{RL}} + 2\sqrt{10\sigma}(RL)^{1/4}T^{3/4}+10\sqrt{\sigma}(RL)^{1/4}T^{3/4},
    \end{equation*}
    thus
    \begin{equation*}
        \frac{1}{T}\sum_{t=1}^{T} \|\nabla f(X_t)\| \leq \mathcal{O}\left(\frac{\sqrt{n R L}}{\sqrt{T}}+\frac{\sigma^2}{\sqrt{R L T}}+\frac{\sqrt{\sigma}(R L)^{1 / 4}}{T^{1 / 4}}\right).
    \end{equation*}

    If we take $\beta$ as an arbitrary constant in $(0, 1)$, then we will need to take $B=T$, so that 
    \begin{equation*}
        \frac{1}{T}\sum_{t=1}^{T} \|\nabla f(X_t)\| \leq \mathcal{O}\left(\frac{\sqrt{n R L}}{\sqrt{T}}+\frac{\sigma}{T^{3 / 2}}+\frac{\sigma}{\sqrt{T}}\right).
    \end{equation*}

    If we take $\beta$ as an arbitrary constant in $(0, 1)$ and take $B=T^{\alpha}$ where $\alpha\in(0, 1)$, we have
    \begin{equation*}
        \frac{1}{T}\sum_{t=1}^{T} \|\nabla f(X_t)\| \leq \mathcal{O}\left(\frac{\sqrt{n R L}}{\sqrt{T}}+\frac{\sigma}{T^{\alpha / 2+1}}+\frac{\sigma}{T^{\alpha/2}}\right).
    \end{equation*}
    The proof is completed.
\end{proof}

\section{Convergence of the generic scheme in \cite{bernstein2024old}}
Now we analyze the convergence of \eqref{eq:muon_descent_form}, 
where we actually analyze the following heavy-ball and mini-batch scheme:
\begin{equation}\label{eq:muon_descent_form2}
\begin{aligned}
& G_{t} \leftarrow \frac{1}{B}\sum_{i=1}^{B}\nabla F\left(X_{t},\xi_{t, i}\right) \\
& B_t \leftarrow \beta B_{t-1} + (1 - \beta) G_{t} \\
& \Delta_t \leftarrow \argmin_{\Delta}\{\tr(B_{t}^\top \Delta) + \frac{1}{2 \eta}\|\Delta\|^2\}\\
& X_{t+1} \leftarrow X_{t} + \Delta_t
\end{aligned}
\end{equation}
where $\|\cdot\|$ denotes the spectral norm, and $\|\cdot\|_{*}$ is the dual norm, i.e. nuclear norm. Such notations carry on in this section.

We need to assume the Lipschitzness of the gradient, i.e.
\begin{equation}
    \|\nabla f(X) - \nabla f(Y)\|_{*}\leq L \|X- Y\|
\end{equation}
which implies \cite[Lemma 5.7]{beck2017first}
\begin{equation}\label{eq:lip_smooth_dual_norm}
    f(Y)\leq f(X) + \langle\nabla f(X), Y - X\rangle + \frac{L}{2}\|X-Y\|^2.
\end{equation}
Here the inner product is just the Frobenius inner product $\langle X, Y\rangle=\tr(X^\top Y)$. Another assumption is that $\nabla F(X,\xi)$ is an unbiased estimator of $\nabla f(X)$ with variance bounded by $\sigma^2$, i.e. 
\begin{equation}
    \E\|\nabla F(X,\xi) - \nabla f(X)\|_{*}^2\leq \sigma^2.
\end{equation}

We have the following result:
\begin{theorem}
    Let $R=f(X_1)-f^*$. If we take $\eta \leq\frac{1}{8L}\sqrt{\frac{1-2\beta}{2\beta}}$ and $\beta$ as any constant in $(0,1)$, then for update \eqref{eq:muon_descent_form2} we have
    \begin{equation*}
    \begin{aligned}
        \frac{1}{T}\sum_{t=1}^{T}\E\|\nabla f(X_t)\|_{*}^2
        \leq \mathcal{O}\left(\frac{R}{T} + \frac{2 n \sigma^2}{B}\right).
    \end{aligned}
    \end{equation*}
\end{theorem}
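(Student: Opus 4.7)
The plan is to follow the same heavy-ball SGD template used for Lemma \ref{lemma:normalized_sgd_grad_bound}, but carried out inside the spectral-nuclear dual pair in which the steepest-descent step is defined. First I would prove a one-step descent inequality. The closed form of the spectral-norm prox step (cf.\ the discussion below \eqref{eq:muon_descent_form}) gives $\Delta_t = -\eta\|B_t\|_* U_t V_t^\top$, so $\|\Delta_t\| = \eta\|B_t\|_*$ and $\langle B_t,\Delta_t\rangle = -\eta\|B_t\|_*^2$. Plugging these into \eqref{eq:lip_smooth_dual_norm}, writing $\nabla f(X_t) = B_t - \hat\delta_t$ with $\hat\delta_t := B_t - \nabla f(X_t)$, using H\"older $|\langle \hat\delta_t,\Delta_t\rangle|\le \|\hat\delta_t\|_*\|\Delta_t\|$, combining with AM-GM and the condition $\eta L \le 1/2$ should yield a descent of the form
\[
f(X_{t+1}) \le f(X_t) - \tfrac{\eta}{4}\|B_t\|_*^2 + \tfrac{\eta}{2}\|\hat\delta_t\|_*^2.
\]
The general-norm inequality $\|a-b\|^2 \ge \tfrac12\|a\|^2 - \|b\|^2$ then converts this to a descent in $\|\nabla f(X_t)\|_*^2$, so that summing reduces everything to bounding $\sum_t \E\|\hat\delta_t\|_*^2$.

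Second, I would expand the momentum error as
\[
\hat\delta_{t+1} = \beta\hat\delta_t - \beta\bigl(\nabla f(X_{t+1})-\nabla f(X_t)\bigr) + (1-\beta)\bigl(G_{t+1}-\nabla f(X_{t+1})\bigr),
\]
square both sides in nuclear norm, and apply Young's inequality twice with parameters tuned so that the $\|\hat\delta_t\|_*^2$ coefficient becomes some $\rho<1$. The smoothness estimate $\|\nabla f(X_{t+1})-\nabla f(X_t)\|_*\le L\eta\|B_t\|_*$ then produces a $\|B_t\|_*^2$ term with a coefficient of order $\beta^2 L^2\eta^2/(1-2\beta)$; the $(1-2\beta)$ in the denominator is exactly the reason that $\beta < 1/2$ appears in the stated stepsize rule. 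For the noise term $\E\|G_{t+1}-\nabla f(X_{t+1})\|_*^2$, the nuclear norm is not Hilbert, so there is no direct mini-batch orthogonality, and I would detour through Frobenius: by independence of the $B$ samples and $\|\cdot\|_F \le \|\cdot\|_*$, we have $\E\|G_{t+1}-\nabla f(X_{t+1})\|_F^2 \le \sigma^2/B$, and then $\|\cdot\|_*^2 \le n\|\cdot\|_F^2$ gives $\E\|G_{t+1}-\nabla f(X_{t+1})\|_*^2 \le n\sigma^2/B$. This conversion is where the $n$ factor in the advertised rate is born.

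Finally, I would sum the momentum recursion from $t=1,\dots,T$, substitute the descent bound $\sum_t \E\|B_t\|_*^2 \le 4R/\eta + 2\sum_t\E\|\hat\delta_t\|_*^2$ into its right-hand side, and solve the resulting self-consistent inequality. This solvability requires the feedback coefficient $\beta^2L^2\eta^2/(1-2\beta)$ to be small enough, and the stepsize bound $\eta \le \tfrac{1}{8L}\sqrt{(1-2\beta)/(2\beta)}$ is precisely calibrated to make this hold with constants of order one. The resulting $\sum_t\E\|\hat\delta_t\|_*^2 = \mathcal{O}(R + Tn\sigma^2/B)$ then feeds back into the $\|\nabla f(X_t)\|_*^2$-form descent to deliver $\tfrac{1}{T}\sum_t\E\|\nabla f(X_t)\|_*^2 \le \mathcal{O}(R/T + n\sigma^2/B)$.

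The main obstacle is the non-Hilbert structure of the nuclear norm: it blocks the usual orthogonality decomposition $\E\|A+\text{noise}\|^2 = \|A\|^2 + \E\|\text{noise}\|^2$ both in the momentum recursion and in the mini-batch variance, and forces the combination of weighted Young's inequalities (yielding the $1/(1-2\beta)$ factor and the restriction $\beta<1/2$) with a Frobenius-norm detour for the variance (yielding the dimension factor $n$). Getting all the Young-inequality constants to line up exactly with the advertised stepsize bound is the bookkeeping-heavy but essentially mechanical part.
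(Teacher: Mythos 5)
Your proposal is correct and follows essentially the same route as the paper: it uses the closed-form prox identities $\langle B_t,\Delta_t\rangle=-\eta\|B_t\|_*^2$, $\|\Delta_t\|=\eta\|B_t\|_*$ together with the spectral-norm smoothness inequality and Young/H\"older to get a descent in $\|B_t\|_*^2$, a Young-type recursion for the momentum error $\hat\delta_t$ with the smoothness term $L\eta\|B_t\|_*$, the Frobenius detour $\|\cdot\|_F\le\|\cdot\|_*$ and $\|\cdot\|_*^2\le n\|\cdot\|_F^2$ to obtain the $n\sigma^2/B$ variance term, and a self-consistent resolution under the stated stepsize restriction. The only differences are cosmetic bookkeeping (forward- vs.\ backward-indexed recursion, and which of the two coupled sums is eliminated first), so the argument matches the paper's proof in substance.
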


\begin{proof}
We first inspect the third line of \eqref{eq:muon_descent_form2}. We know that \cite[Proposition 1]{bernstein2024old}
$$
\argmin_{\Delta}\{\tr(B^\top \Delta) + \frac{1}{2 \eta}\|\Delta\|_2^2\} = -\eta \|B\|_{*} \argmax_{\|A\|=1} \langle A, B\rangle
$$
where $A^*:=\argmax_{\|A\|=1} \langle A, B\rangle = U V^\top$ is exactly the matrix such that $\langle A^*, B\rangle=\|B\|_{*}$, where $B=U\Sigma V^\top$ is the SVD. Therefore we have the following useful results:
\begin{equation}\label{eq:useful_eqs}
\begin{aligned}
    & \langle\Delta_t, B_t\rangle = -\eta \|B_t\|_{*}^2 \\
    &\|\Delta_t\| = \eta \|B_t\|_{*}
\end{aligned}
\end{equation}

Now we analyze $B_t - \nabla f(X_t)$, by convexity
\begin{equation}\label{eq:bt_grad_error}
\begin{aligned}
    & \|B_t - \nabla f(X_t)\|_{*}^2 \\
    = & \|(1-\beta) (G_t - \nabla f(X_t)) + \beta (B_{t-1} - \nabla f(X_t))\|_{*}^2 \\
    \leq & (1-\beta) \|G_t - \nabla f(X_t)\|_{*}^2 + \beta\|B_{t-1} - \nabla f(X_t)\|_{*}^2 \\
    \leq & (1-\beta) \|G_t - \nabla f(X_t)\|_{*}^2 + 2\beta\|B_{t-1} - \nabla f(X_{t-1})\|_{*}^2 + 2\beta\|\nabla f(X_{t}) - \nabla f(X_{t-1})\|_{*}^2.
\end{aligned}
\end{equation}
Now if we only take expectation w.r.t $\xi_t$ (random variables of iteration $t$) we get 
\begin{equation}\label{eq:variance_spectral_norm}
\begin{aligned}
    \E\|G_t - \nabla f(X_t)\|_{*}^2 \leq & n\E\|G_t - \nabla f(X_t)\|_{F}^2 \\
    = & n \E\| \frac{1}{B}\sum_{i=1}^{B}\left(\nabla F(X_{t},\xi_{t, i} - \nabla f(X_t)\right)\|_{F}^2 \\
    \stackrel{(i)}= & \frac{n}{B}\E\| \left(\nabla F(X_{t},\xi_{t, 1} - \nabla f(X_t)\right)\|_{F}^2 \\
    \leq & \frac{n}{B}\E\| \left(\nabla F(X_{t},\xi_{t, 1} - \nabla f(X_t)\right)\|_{*}^2 \leq \frac{n}{B}\sigma^2,
\end{aligned}
\end{equation}
where $(i)$ is due to the independency of the samples $\xi_{t,i}$.

Therefore
\begin{equation*}
\begin{aligned}
    & \E\|B_t - \nabla f(X_t)\|_{*}^2 \\
    \leq & (1-\beta) \frac{n\sigma^2}{B} + (1+\gamma)\beta\|B_{t-1} - \nabla f(X_{t-1})\|_{*}^2 + (1+\frac{1}{\gamma})\beta L^2\|X_t - X_{t-1}\|^2 \\
    = & (1-\beta) \frac{n\sigma^2}{B} + (1+\gamma)\beta\|B_{t-1} - \nabla f(X_{t-1})\|_{*}^2 + (1+\frac{1}{\gamma})\beta L^2\|\Delta_t\|^2 \\
    \leq & (1+\gamma)\beta\|B_{t-1} - \nabla f(X_{t-1})\|_{*}^2 + (1-\beta) \frac{n\sigma^2}{B} + (1+\frac{1}{\gamma})\beta L^2 \eta^2 \|B_t\|_{*}^2,
\end{aligned}
\end{equation*}
where $\gamma>0$ is a constant to be determined.

Therefore we have
\begin{equation*}
\begin{aligned}
    (1-(1+\gamma)\beta) & \sum_{t=1}^{T}\E\|B_t - \nabla f(X_t)\|_{*}^2 \\
    \leq &  (1-\beta) \frac{n\sigma^2}{B} T + (1+1/\gamma)\beta L^2 \eta^2 \sum_{t=1}^{T}\E\|B_t\|_{*}^2.
\end{aligned}
\end{equation*}

Now since 
\begin{equation*}
\sum_{t=1}^{T}\E\|B_t\|_{*}^2 \leq 2\sum_{t=1}^{T}\E\|\nabla f(X_t)\|_{*}^2 + 2\sum_{t=1}^{T}\E\|B_t - \nabla f(X_t)\|_{*}^2
\end{equation*}
we have
\begin{equation}\label{eq:temp1}
\begin{aligned}
    &(1 - (1+\gamma)\beta - 2(1+1/\gamma) \beta L^2 \eta^2) \sum_{t=1}^{T}\E\|B_t - \nabla f(X_t)\|_{*}^2 \\
    \leq & (1-\beta) \frac{n\sigma^2}{B} T + 2(1+1/\gamma)\beta L^2 \eta^2 \sum_{t=1}^{T}\E\|\nabla f(X_t)\|_{*}^2.
\end{aligned}
\end{equation}

Now by \eqref{eq:lip_smooth_dual_norm} we have
\begin{equation*}
\begin{aligned}
    f(X_{t+1}) \leq & f(X_t) + \langle\nabla f(X_t), \Delta_t\rangle + \frac{L}{2}\|\Delta_t\|^2 \\
    \text{\eqref{eq:useful_eqs}}= & f(X_t) - \eta \|B_t\|_{*}^2 + \frac{\eta^2 L}{2}\|B_t\|_{*}^2 + \langle\nabla f(X_t) - B_t, \Delta_t\rangle \\
    \leq & f(X_t) - (\eta - \frac{\eta^2 L}{2})\|B_t\|_{*}^2 + \frac{c}{2}\|\nabla f(X_t) - B_t\|_{*}^2 + \frac{1}{2c} \eta^2 \|B_t\|_{*}^2
\end{aligned}
\end{equation*}
so we get
\begin{equation*}
\begin{aligned}
    (\eta - \frac{\eta^2 L}{2} - \frac{\eta^2}{2c})\|B_t\|_{*}^2 \leq & f(X_t) - f(X_{t+1}) + \frac{c}{2}\|\nabla f(X_t) - B_t\|_{*}^2.
\end{aligned}
\end{equation*}
Therefore the following holds:
\begin{equation*}
\begin{aligned}
    &(\eta - \frac{\eta^2 L}{2} - \frac{\eta^2}{2c})\|\nabla f(X_t)\|_{*}^2 \\
    \leq & 2(\eta - \frac{\eta^2 L}{2} - \frac{\eta^2}{2c})\|B_t\|_{*}^2 + 2 (\eta - \frac{\eta^2 L}{2} - \frac{\eta^2}{2c})\|\nabla f(X_t) - B_t\|_{*}^2\\
    \leq & 2(f(X_t) - f(X_{t+1})) + \left(c+(2\eta - \eta^2 L - \frac{\eta^2}{c})\right)\|\nabla f(X_t) - B_t\|_{*}^2.
\end{aligned}
\end{equation*}

Taking $c = \eta$ we get
\begin{equation}\label{eq:grad_norm_sq_bound}
\begin{aligned}
    &(\frac{\eta}{2}- \frac{\eta^2 L}{2})\|\nabla f(X_t)\|_{*}^2 \leq 2(f(X_t) - f(X_{t+1})) + \left(2\eta - \eta^2 L\right)\|\nabla f(X_t) - B_t\|_{*}^2.
\end{aligned}
\end{equation}

Now sum up above inequality we get
\begin{equation*}
\begin{aligned}
    &(\frac{\eta}{2}- \frac{\eta^2 L}{2})\sum_{t=1}^{T}\E\|\nabla f(X_t)\|_{*}^2 \\
    \leq & 2(f(X_0) - f^*) + \left(2\eta - \eta^2 L\right)\sum_{t=1}^{T}\E\|\nabla f(X_t) - B_t\|_{*}^2.
\end{aligned}
\end{equation*}
Now apply \eqref{eq:temp1} to above equation we get 
\begin{equation*}
\begin{aligned}
    &(\frac{\eta}{2}- \frac{\eta^2 L}{2})\sum_{t=1}^{T}\E\|\nabla f(X_t)\|_{*}^2 \\
    \leq & 2(f(X_0) - f^*) + \frac{2\eta - \eta^2 L}{(1 - (1+\gamma)\beta - 2(1+1/\gamma) \beta L^2 \eta^2)}\left( (1-\beta) \frac{n\sigma^2}{m} T + 2(1+1/\gamma)\beta L^2 \eta^2  \sum_{t=1}^{T}\E\|\nabla f(X_t)\|_{*}^2\right).
\end{aligned}
\end{equation*}
That is, the following holds:
\begin{equation*}
\begin{aligned}
    &\left(\frac{\eta}{2}- \frac{\eta^2 L}{2} - \frac{2(2\eta - \eta^2 L)(1+1/\gamma)\beta L^2 \eta^2}{(1 - (1+\gamma)\beta - 2(1+1/\gamma) \beta L^2 \eta^2)}\right)\sum_{t=1}^{T}\E\|\nabla f(X_t)\|_{*}^2 \\
    \leq & 2(f(X_0) - f^*) + \frac{(1-\beta)(2\eta - \eta^2 L)}{(1 - (1+\gamma)\beta - 2(1+1/\gamma) \beta L^2 \eta^2)} \frac{n\sigma^2}{m} T.
\end{aligned}
\end{equation*}

Taking $\gamma\leq 1/(2\beta)-1$ so that $1 - (1+\gamma)\beta\geq 1/2$, also taking $\eta \leq\frac{1}{8L}\sqrt{\frac{1-2\beta}{2\beta}}$ and $\beta$ as any constant in $(0,1)$, we get
    \begin{equation*}
    \begin{aligned}
        \frac{1}{T}\sum_{t=1}^{T}\E\|\nabla f(X_t)\|_{*}^2
        \leq \mathcal{O}\left(\frac{R}{T} + \frac{2 n \sigma^2}{B}\right).
    \end{aligned}
    \end{equation*}
This completes the proof. 
\end{proof}

\begin{remark}
    Note that we are not able to achieve a batch free convergence for \eqref{eq:muon_descent_form2}. The difficulty stems from \eqref{eq:bt_grad_error}, where we used convexity of the quadratic and norm functions. A better bound is available in \citet[Theorem 3.5]{ghadimi2020single} yet it only works for vector norm to our knowledge.
\end{remark}

\begin{remark}
    If we replace the norm in this section by an arbitrary norm, \eqref{eq:variance_spectral_norm} no longer holds. In stead we can only say $\E\|G_t - \nabla f(X_t)\|_{*}^2 \leq\sigma^2$ and the final bound becomes
    \begin{equation*}
    \begin{aligned}
        \frac{1}{T}\sum_{t=1}^{T}\E\|\nabla f(X_t)\|_{*}^2
        \leq \mathcal{O}\left(\frac{R}{T} + \sigma^2\right).
    \end{aligned}
    \end{equation*}
    In this case there is an unavoidable constant variance term even with mini-batch updates.
\end{remark}

\section*{Acknowledgment}
JL and MH thank Jeremy Bernstein, also Zhiqi Bu for helpful discussions.

\clearpage
\bibliographystyle{abbrvnat} 
\bibliography{reference}

\end{document}